\documentclass[12pt]{amsart}
\usepackage{amssymb,latexsym,amsthm}
\usepackage{graphicx}
\usepackage{color}




\newtheorem{theorem}{Theorem}[section]

\newtheorem{cor}[theorem]{Corollary}

\theoremstyle{definition}

\theoremstyle{remark}

\numberwithin{equation}{section}

\textwidth=13.5cm
\textheight=23cm
\hoffset=-1cm

\begin{document}

\baselineskip=17pt

\title[]
{Isometries between groups of invertible elements in $C^{*}$-algebras}

\author{Osamu~Hatori}
\address{Department of Mathematics, Faculty of Science,
Niigata University, Niigata 950-2181 Japan}
\curraddr{}
\email{hatori@math.sc.niigata-u.ac.jp}

\author{Keiichi~Watanabe}
\address{Department of Mathematics, Faculty of Science,
Niigata University, Niigata 950-2181 Japan}
\curraddr{}
\email{wtnbk@math.sc.niigata-u.ac.jp}

\thanks{The authors were partly
supported by the Grants-in-Aid for Scientific
Research, Japan Society for the Promotion of Science.
}

\keywords{}

\begin{abstract}
In this  paper we describe all surjective isometries between open subgroups of the groups of invertible elements in unital $C^{*}$-algebras.
\end{abstract}
\maketitle
\section{Introduction}

This paper arises from a desire to study isometries between groups of invertible elements in unital Banach algebras (cf. \cite{hatori1,hatori2,hm,hatori3}). As is proved in \cite{hatori2} a surjective isometry between open subgroups of the groups of all invertible elements in unital semisimple Banach algebras is extended to a surjective real-linear isometry between the underlying Banach algebras. Furthermore if the given Banach algebras are commutative, then the extended map preserves multiplication if it is unital. Without assuming commutativity, we may conjecture that the Jordan structure is essentially preserved. In this paper we concentrate on the case of unital $C^{*}$-algebras and support the conjecture in this case. Due to the celebrated theorem of Kadison \cite{k} surjective complex-linear isometries between $C^{*}$-algebras are represented as a Jordan ${*}$-isomorphism followed by a multiplication by a unitary element, whereas we consider isometries as just metric spaces between open subgroups of the groups of invertible elements in unital $C^{*}$-algebras and give a complete descriptions for such isometries.

\section{The results}

Throughout this section the group of the all invertible elements in a unital $C^{*}$-algebra $A$ is denoted by $A^{-1}$. It is well-known that the group $A^{-1}$ itself is an open subset of $A$ as well as an open subgroup of $A^{-1}$ always contains the principal component of $A^{-1}$. The identity element in $A$ is denoted by $I_A$. Recall that a central projection in $A$ is a projection which is in the center of $A$.
\begin{theorem}\label{main}
Let $A$ and $B$ be unital $C^{*}$-algebras and ${\mathcal A}$ and ${\mathcal B}$ open subgroups of $A^{-1}$ and $B^{-1}$ respectively. Suppose that $T$ is a bijection from ${\mathcal A}$ onto ${\mathcal B}$. Then $T$ is an isometry if and only if $T(I_A)$ is unitary in $B$ and there are a central projection $P$ in $B$ and a surjective Jordan $*$-isomorphism $J$ from $A$ onto $B$ such that 
\begin{equation}\label{1}
T(a)=T(I_A)PJ(a)+T(I_A)(I_B-P)J(a)^*, \quad a\in {\mathcal A}
\end{equation}
holds. Furthermore the operator $T(I_A)PJ(\cdot)+T(I_A)(I_B-P)J(\cdot)^*$ defines a surjective real-linear isometry from $A$ onto $B$.
\end{theorem}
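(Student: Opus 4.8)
The plan is to handle the two implications separately, with almost all of the work falling on the ``only if'' direction; the sufficiency and the final ``furthermore'' claim are essentially a single verification.

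For sufficiency I would show that the operator $\Phi(a)=T(I_A)PJ(a)+T(I_A)(I_B-P)J(a)^*$ is a surjective real-linear isometry of $A$ onto $B$, whence its restriction to $\mathcal A$, which is exactly $T$, is an isometry. The map $a\mapsto J(a)$ is complex-linear and isometric because a Jordan $*$-isomorphism between $C^{*}$-algebras is isometric, and the involution is a conjugate-linear isometry. Since $P$ is a central projection, $B$ splits as the $C^{*}$-direct sum $PB\oplus(I_B-P)B$ with $\|x\|=\max\{\|Px\|,\|(I_B-P)x\|\}$; applying this to $PJ(a)+(I_B-P)J(a)^*$ and using $\|(I_B-P)J(a)^*\|=\|(I_B-P)J(a)\|$ (centrality of $P$) shows $a\mapsto PJ(a)+(I_B-P)J(a)^*$ is a bijective real-linear isometry onto $B$. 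Left multiplication by the unitary $T(I_A)$ preserves norms and is bijective, so $\Phi$ is a surjective real-linear isometry of $A$ onto $B$, as asserted.

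For necessity I would first invoke \cite{hatori2}: unital $C^{*}$-algebras are semisimple, so the surjective isometry $T$ extends to a surjective real-linear isometry $\tilde T\colon A\to B$ with $\tilde T|_{\mathcal A}=T$, and in particular $\tilde T(I_A)=T(I_A)$. The problem is thereby reduced to a structure theorem: every surjective real-linear isometry between unital $C^{*}$-algebras has the form $\tilde T(a)=uPJ(a)+u(I_B-P)J(a)^*$ with $u=\tilde T(I_A)$ unitary, $P$ central, and $J$ a Jordan $*$-isomorphism. To attack this I would split $\tilde T=T_{+}+T_{-}$ into its complex-linear part $T_{+}(a)=\frac12(\tilde T(a)-i\tilde T(ia))$ and its conjugate-linear part $T_{-}(a)=\frac12(\tilde T(a)+i\tilde T(ia))$.

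The heart of the matter, and the step I expect to be the main obstacle, is to produce a central projection $P\in B$ with $T_{+}(A)\subseteq PB$ and $T_{-}(A)\subseteq(I_B-P)B$; that is, to show that the linear and conjugate-linear behavior of $\tilde T$ is separated along a central decomposition of $B$. I would extract $P$ from an orthogonality of the ranges of $T_{+}$ and $T_{-}$ (of the type $T_{+}(a)^{*}T_{-}(b)=0=T_{+}(a)T_{-}(b)^{*}$), derived by exploiting the isometry identity $\|\tilde T(x)\|=\|x\|$ on a family of elements built from $a,b,ia,ib$ together with the extreme-point geometry of the unit ball. Once $P$ is in hand, $a\mapsto P\tilde T(a)$ is a complex-linear surjective isometry of $A$ onto $PB$, to which Kadison's theorem \cite{k} applies, while $a\mapsto(I_B-P)\tilde T(a)$ is a conjugate-linear surjective isometry of $A$ onto $(I_B-P)B$, to which the conjugate-linear form of \cite{k} (obtained by precomposing with the involution) applies. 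These yield unitaries $w\in PB$ and $v\in(I_B-P)B$ and Jordan $*$-isomorphisms onto the two central summands, which assemble into a single Jordan $*$-isomorphism $J$ of $A$ onto $B$ and a unitary $u=w+v=\tilde T(I_A)=T(I_A)$. Reading off the resulting expression $\tilde T(a)=uPJ(a)+u(I_B-P)J(a)^*$ and restricting to $\mathcal A$ gives formula \eqref{1}, which completes the proof.
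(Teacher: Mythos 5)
Your sufficiency argument and your overall architecture for necessity (extend $T$ to a surjective real-linear isometry $\tilde T$ via \cite{hatori2}, locate a central projection $P$ separating the complex-linear from the conjugate-linear behaviour, then apply Kadison) agree with the paper. But the step you yourself identify as ``the heart of the matter'' --- producing the central projection $P$ with $T_{+}(A)\subseteq PB$ and $T_{-}(A)\subseteq (I_B-P)B$ --- is left as an unexecuted plan. Saying you would derive orthogonality relations ``by exploiting the isometry identity on a family of elements built from $a,b,ia,ib$ together with the extreme-point geometry of the unit ball'' is not a proof; this separation \emph{is} the structure theorem for surjective real-linear isometries of $C^{*}$-algebras, and it is exactly what the paper imports from Chu--Dang--Russo--Ventura \cite[Theorem 6.4]{cdrv}: $\tilde T$ preserves the triple product $ab^*c+cb^*a$. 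From that identity the paper obtains everything by direct computation: $\tilde T(I_A)$ is unitary, the normalization $T_0=T(I_A)^{-1}\tilde T$ is unital and $*$-preserving, $-iT_0(iI_A)$ is a central symmetry, and $P=\bigl(I_B-iT_0(iI_A)\bigr)/2$ is the desired central projection. Without either citing such a result or actually carrying out the hard analysis, your necessity proof is incomplete at its central point.

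There is also a concrete error in how you propose to finish once $P$ is found. The compression $a\mapsto P\tilde T(a)$ is surjective onto $PB$ but is \emph{not} an isometry when $0\neq P\neq I_B$: in the final formula $P\tilde T(a)=uPJ(a)$ its kernel is the ideal $J^{-1}\bigl((I_B-P)B\bigr)$, and already for $A=B=\mathbb{C}\oplus\mathbb{C}$, $\tilde T=\mathrm{id}$, $P=(1,0)$ one has $\|P\tilde T(a)\|=|a_1|\neq\|a\|$ in general. So Kadison's theorem cannot be applied separately to the two corner maps as you describe. The paper sidesteps this by recombining first: $T_0'(a)=PT_0(a)+(I_B-P)T_0(a)^*$ is a \emph{unital complex-linear} surjective isometry of $A$ onto all of $B$ (the max-norm computation over the central decomposition gives $\|T_0'(a)\|=\max\{\|PT_0(a)\|,\|(I_B-P)T_0(a)\|\}=\|a\|$), and Kadison is applied once, to $T_0'$, yielding the single Jordan $*$-isomorphism $J=T_0'$.
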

\begin{proof}
Suppose that $T:{\mathcal A}\to {\mathcal B}$ is a surjective isometry. Applying \cite[Theorem 3.2]{hatori2} $T$ is extended to a real-linear isometry $\tilde{T}$ from $A$ onto $B$ since $\mathrm{rad}(B)=\{0\}$ for $B$ is a $C^*$-algebra. A $C^*$-algebra is a real $C^*$-algebra in the sense of \cite{cdrv}, hence by \cite[Theorem 6.4]{cdrv} the equality
\begin{equation}\label{2}
\tilde{T}(ab^*c+cb^*a)=\tilde{T}(a)\tilde{T}(b)^*\tilde{T}(c)+\tilde{T}(c)\tilde{T}(b)^*\tilde{T}(a)
\end{equation}
holds for every triple $a,b,c\in A$. Letting $a=b=c=I_A$ in (\ref{2}) we have by a simple calculation that $I_B=\tilde{T}(I_A)\tilde{T}(I_A)^*=\tilde{T}(I_A)^*\tilde{T}(I_A)$ since $\tilde{T}$ is real-linear and $\tilde{T}(I_A)=T(I_A)$ is invertible in $B$; $\tilde{T}(I_A)$ is unitary in $B$. Denoting $T_0(\cdot)=(T(I_A))^{-1}\tilde{T}(\cdot)$ it requires only elementary calculation to check that $T_0$ is a surjective real-linear isometry from $A$ onto $B$ such that $T_0(I_A)=I_B$, $T_0({\mathcal A})={\mathcal B}$, and 
\begin{equation}\label{3}
T_0(ab^*c+cb^*a)=T_0(a)T_0(b)^*T_0(c)+T_0(c)T_0(b)^*T_0(a)
\end{equation}
holds for every triple $a,b,c\in A$. Substituting  $a=c=I_A$ in (\ref{3}) we have the equality
\begin{equation}\label{4}
T_0(b^*)=T_0(b)^*
\end{equation}
for every $b\in A$; $T_0$ is $*$-preserving. Then by (\ref{3}) and (\ref{4}) we obtain
\begin{equation}\label{4.5}
T_0(abc+cba)=T_0(a)T_0(b)T_0(c)+T_0(c)T_0(b)T_0(a)
\end{equation}
holds for every triple $a,b,c\in A$. Letting $a=c$ in (\ref{4.5}) observe that
\begin{equation}\label{5}
T_0(aba)=T_0(a)T_0(b)T_0(a)
\end{equation}
for every pair $a,b\in A$ by (\ref{4}). In particular, letting $b=I_A$ in (\ref{5}) we obtain
\begin{equation}\label{5.5}
T_0(a^2)=(T_0(a))^2
\end{equation}
for every $a\in A$.

We claim that $s^2=I_A$ if and only if $(T_0(s))^2=I_B$. Indeed for any $s\in A$
\begin{equation}\label{6}
s^2=I_A \Longleftrightarrow T_0(s^2)=I_B \Longleftrightarrow (T_0(s))^2=I_B.
\end{equation}
In what follow we call $s$ in $A$ (resp. $B$) a symmetry if $s=s^*$ and $s^2=I_A$ (resp. $s^2=I_B$). Then by (\ref{4}) and (\ref{6}) $s\in A$ is a symmetry if and only if $T_0(s)\in B$ is a symmetry.

Letting $a=iI_A$ in (\ref{5.5}) we obtain
\begin{equation}\label{6.5}
-I_B=T_0(-I_A)=T_0((iI_A)^2)=(T_0(iI_A))^2,
\end{equation}
hence 
\begin{equation}\label{7}
(-iT_0(iI_A))^2=I_B
\end{equation}
is observed. We also have
\begin{multline*}
T_0(iI_A)T_0(iI_A)^*=T_0(iI_A)T_0((iI_A)^*)\\
=T_0(iI_A)T_0(-iI_A)=-T_0((iI_A)^2)=T_0(I_A)=I_B
\end{multline*}
and similarly $T_0(iI_A)^*T_0(iI_A)=I_B$; $T_0(iI_A)$ is unitary. Applying that $T_0$ is real-linear
\begin{equation}\label{7.5}
(-iT_0(iI_A))^*=iT_0((iI_A)^*)=-iT_0(iI_A)
\end{equation}
is observed, hence together with (\ref{7}), $-iT_0(iI_A)$ is a symmetry in $B$. It requires a simple calculation to check that
\[
P=\frac{-iT_0(iI_A)+I_B}{2}
\]
is a projection in $B$. By (\ref{6.5})
\begin{multline*}
-T_0(a)T_0(iI_A)=T_0(iI_A)T_0(iI_A)T_0(a)T_0(iI_A) \\
= T_0(iI_A)T_0((iI_A)a(iI_A))=T_0(iI_A)T_0(-a)
\end{multline*}
holds, hence 
\begin{equation}\label{8}
T_0(a)T_0(iI_A)=T_0(iI_A)T_0(a)
\end{equation}
holds for every $a\in A$. This means that $T_0(iI_A)$ and hence $P$ commute with every elements in $B$ for $T_0({A})={B}$. Therefore $P$ is a central projection in $B$.

Define a real-linear operator $T_0':A\to B$ by $T_0'(a)=PT_0(a)+(I_B-P)T_0(a)^*$ for $a\in A$. We claim that $T_0'$ is a surjective {\it complex}-linear isometry from $A$ onto $B$. To see that $T_0'$ is a surjection, let $b\in B$ arbitrarily. Then there exists $a\in A$ with $T_0(a)=Pb+(I_B-P)b^*$ for $T_0$ is surjective and $Pb+(I_B-P)b^*\in B$. As $P$ is a central projection, so is $I_B-P$, then it can be easily calculated that $T_0'(a)=b$ holds. Hence $T_0'$ is a surjection. 

We assert that $T_0'(ia)=iT_0'(a)$ holds for every $a\in A$. This implies that $T_0'$ is complex-linear as we have already known that $T_0'$ is real-linear. Let $a\in A$. Substituting $b=iI_A$ and $c=I_A$ in the equation (\ref{4.5}) and applying (\ref{8}) we obtain
\begin{multline*}
2T_0(ia)=T_0(a(iI_A)+(iI_A)a)=T_0(a)T_0(iI_A)T_0(I_A)+T_0(I_A)T_0(iI_A)T_0(a)\\
=T_0(a)T_0(iI_A)+T_0(iI_A)T_0(a)=2T_0(iI_A)T_0(a).
\end{multline*}
Hence
\begin{equation}\label{9}
T_0(ia)=T_0(iI_A)T_0(a)
\end{equation}
holds. Then by (\ref{6.5}) 
\begin{equation}\label{9.5}
PT_0(ia)=\frac{-iT_0(iI_A)+I_B}{2}T_0(iI_A)T_0(a)=i\frac{I_B-iT_0(iI_A)}{2}T_0(a)=iPT_0(a)
\end{equation}
holds. By (\ref{8}) and (\ref{9}) we obtain that $T_0(ia)^*=T_0(iI_A)^*T_0(a)^*$. We have already learnt that $T_0$ is 
real-linear and $*$-preserving, so $T_0(iI_A)^*=-T_0(iI_A)$ 
holds, hence
\begin{equation}\label{10}
T_0(ia)^*=-T_0(iI_A)T_0(a)^*
\end{equation}
is observed. Hence we obtain
\begin{multline}\label{11}
(I_B-P)T_0(ia)^*=-\frac{I_B+iT_0(iI_A)}{2}T_0(iI_A)T_0(a)^* \\
=-\frac{T_0(iI_A)-iI_B}{2}T_0(a)^*=i(I_B-P)T_0(a)^*.
\end{multline}
It follows by (\ref{9.5}) and (\ref{11}) that $T_0'(ia)=iT_0'(a)$, observing that $T_0'$ is complex-linear. 

Next we assert that $T_0'$ is an isometry. For this purpose we will verify that 
\[
\|Pb+(I_B-P)c\|=\max\{\|Pb\|,\|(I_B-P)c\|\}
\]
holds for every pair $b$ and $c$ in $B$. We may suppose that $B$ is a subalgebra of the algebra $B(H)$ of all bounded linear operators on a Hilbert space $H$. Let $x$ be an arbitrary element in $H$. We have learnt that $P$ is a central projection, and so is $I_B-P$, hence $Pbx=PbPx$ and $(I_B-P)cx=(I_B-P)c(I_B-P)x$ hold, and 
\begin{multline*}
\|(Pb+(I_B-P)c)x\|^2=\|PbPx\|^2+\|(I_B-P)c(I_B-P)x\|^2
\\
\le \|Pb\|^2\|Px\|^2+\|(I_B-P)c\|^2\|(I_B-P)x\|^2
\\
\le \max\{\|Pb\|^2,\|(I_B-P)c\|^2\}(\|Px\|^2+\|(I_B-P)x\|^2)
\\
=\max\{\|Pb\|^2,\|(I_B-P)c\|^2\}\|x\|^2
\end{multline*}
hold, asserting that 
\[
\|Pb+(I_B-P)c\|\le \max \{\|Pb\|, \|(I_B-P)c\|\}.
\]
By a simple calculation we can easily check that
\[
\|Pbx\|,\,\,\|(I_B-P)cx\|\le \|Pbx+(I_B-P)cx\|\le \|Pb+(I_B-P)c\|\|x\|
\]
hold for every $x\in H$, hence 
\[
\max \{\|Pb\|, \|(I_B-P)c\|\}\le \|Pb+(I_B-P)c\|
\]
is obtained. It follows that 
\[
\max \{\|Pb\|, \|(I_B-P)c\|\}= \|Pb+(I_B-P)c\|
\]
holds. Applying the above equation for $b=T_0(a)$ and $c=T_0(a)^*$ or $T_0(a)$ we can easily check that 
\begin{multline*}
\|T_0'(a)\|=\|PT_0(a)+(I_B-P)T_0(a)^*\|=\max\{\|PT(a)\|, \|(I_B-P)T_0(a)^*\|\} \\
=\max\{\|PT_0(a)\|, \|(I_B-P)T_0(a)\|\}=\|T_0(a)\|=\|a\|
\end{multline*}
hold for every $a\in A$, observing that  $T_0'$ is an isometry as $T_0'$ is real-linear. 

We have obtained that $T_0'$ is a surjective isometry from $A$ onto $B$ such that $T_0'(I_A)=I_B$. By just a simple application of a celebrated representation theorem of Kadison for isometries between $C^*$-algebras, $T_0'$ is a surjective Jordan $*$-isomorphism from $A$ onto $B$. Denoting $J=T_0'$, $J(a)=PT_0(a)+(I_B-P)T_0(a)^*$ for every $a\in A$. Then $PJ(a)=PT_0(a)$, and 
\[
(I_B-P)J(a)^*=((I_B-P)J(a))^*=((I_B-P)T_0(a)^*)^*=(I_B-P)T_0(a)
\]
hold since $I_B-P$ is self-adjoint and is in the center of $B$. Thus $T_0(a)=PJ(a)+(I_B-P)J(a)^*$ holds for every $a\in A$. As $T(I_A)$ is unitary, $T(I_A)T_0(\cdot)$ is a surjective isometry form $A$ onto $B$ and (\ref{1}) holds for every $a \in {\mathcal A}$. 

Conversely, suppose that $T(I_A)$ is unitary in $B$, $P$ is a central projection in $B$, and $J$ is a surjective Jordan $*$-isomorphism from $A$ onto $B$ such that
\[
T(a)=T(I_A)PJ(a)+T(I_A)(I_B-P)J(a)^*
\]
holds for every $a\in {\mathcal A}$. Since $J$ is an isometry from $A$ onto $B$ (cf. \cite[Theorem 6.2.5]{fj}) it requires only a way similar to the above argument to verify that $T(I_A)PJ(\cdot)+T(I_A)(I_B-P)J(\cdot)^*$ defines a surjective isometry from $A$ onto $B$. As $T$ is a bijection from ${\mathcal A}$ onto ${\mathcal B}$, we obtain that $T$ is an isometry from ${\mathcal A}$ onto ${\mathcal B}$.
\end{proof}

For a complex Hilbert space $H$ the algebra of all bounded linear operators on $H$ is denoted by $B(H)$.
We describe the structure of all surjective isometries between open subgroups of $B(H)^{-1}$. To formulate the result we need the following notation. Beside the adjoint operation on the algebra $B(H)$ we shall also need the operation of transposition. It is defined by choosing a complete orthonomal system in $H$ and for any operator $a$ considering the operator $a^T$ whose matrix in the given basis is the transpose of the corresponding matrix of $a$. It can be seen that the map $a\mapsto a^T$ is a well-defined linear $*$-antiautomorphism of $B(H)$. Then conjugate $\overline{a}$ of $a\in B(H)$ is defined by the the formula $\overline{a}=(a^{*})^T$. Our result reads as follows.
\begin{cor}
Let $H_1$ and $H_2$ be complex Hilbert spaces. Suppose that $T$ is a surjective isometry from ${\mathcal A}$ onto ${\mathcal B}$, where ${\mathcal A}$ and ${\mathcal B}$ are open subgroups of $B(H_1)^{-1}$ and $B(H_2)^{-1}$ respectively. Then $T(I_{B(H_1)})$ is a unitary operator and there is a unitary operator $w$ (complex-linear isometry) from $H_1$ onto $H_2$ such that $T$ is of one of the following forms:
\begin{enumerate}
\item
$T(a)=T(I_{B(H_1)})waw^{*}\quad \text{for all}\,\, a\in {\mathcal A}$

\item
$T(a)=T(I_{B(H_1)})wa^{*}w^{*}\quad \text{for all}\,\, a\in {\mathcal A}$

\item
$T(a)=T(I_{B(H_1)})wa^{T}w^{*}\quad \text{for all}\,\, a\in {\mathcal A}$

\item
$T(a)=T(I_{B(H_1)})w\overline{a}w^{*}\quad \text{for all}\,\, a\in {\mathcal A}$
\end{enumerate}
\end{cor}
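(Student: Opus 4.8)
The plan is to feed the representation furnished by Theorem~\ref{main} into the special structure of $B(H)$ as a type~I factor, and then invoke the classical descriptions of Jordan $*$-isomorphisms and of $*$-isomorphisms between full operator algebras. First I would apply Theorem~\ref{main} with $A=B(H_1)$ and $B=B(H_2)$: since $T$ is a surjective isometry, $T(I_{B(H_1)})$ is unitary and there are a central projection $P$ in $B(H_2)$ and a surjective Jordan $*$-isomorphism $J$ from $B(H_1)$ onto $B(H_2)$ with
\[
T(a)=T(I_{B(H_1)})PJ(a)+T(I_{B(H_1)})(I_{B(H_2)}-P)J(a)^{*},\qquad a\in\mathcal A.
\]
The decisive simplification is that $B(H_2)$ is a factor: its center consists only of scalar multiples of the identity, so the sole central projections are $0$ and $I_{B(H_2)}$. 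Hence the formula collapses to exactly one of
\[
T(a)=T(I_{B(H_1)})J(a)\qquad\text{or}\qquad T(a)=T(I_{B(H_1)})J(a)^{*},
\]
according as $P=I_{B(H_2)}$ or $P=0$.

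Next I would pin down $J$. Because $B(H_2)$ is prime, the classical theorem on Jordan homomorphisms (Herstein) forces the Jordan $*$-isomorphism $J$ to be either a $*$-isomorphism or a $*$-anti-isomorphism of $B(H_1)$ onto $B(H_2)$. In the first case, the spatial implementation of $*$-isomorphisms between full operator algebras yields a unitary $w\colon H_1\to H_2$ with $J(a)=waw^{*}$. In the second case I would compose $J$ with the transposition $\tau(a)=a^{T}$: since $\tau$ is a $*$-anti-automorphism of $B(H_1)$ with $\tau^{2}=\mathrm{id}$, the map $J\circ\tau$ is a genuine $*$-isomorphism, so $J(\tau(a))=waw^{*}$ for some unitary $w$, and applying $\tau$ once more gives $J(a)=wa^{T}w^{*}$.

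Finally I would combine the two dichotomies. Substituting $J(a)=waw^{*}$ or $J(a)=wa^{T}w^{*}$ into the two surviving forms of $T$, and using $(waw^{*})^{*}=wa^{*}w^{*}$ together with $(wa^{T}w^{*})^{*}=w(a^{T})^{*}w^{*}=w\overline{a}w^{*}$ (recalling $\overline{a}=(a^{*})^{T}$), produces precisely the four listed alternatives: $(1)$ and $(3)$ arise from $P=I_{B(H_2)}$ (isomorphism, resp. anti-isomorphism), while $(2)$ and $(4)$ arise from $P=0$. The one step that demands care, rather than a genuine obstacle, is the bookkeeping in this last combination: one must track how the adjoint and the transpose interact so that the anti-isomorphism-with-$P=0$ case lands on the conjugation $\overline{a}$ and not on $a$ or $a^{*}$. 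Everything else is a direct appeal to the cited structure theorems, so the proof is essentially an assembly of known facts once the factoriality of $B(H_2)$ has trivialized the central projection.
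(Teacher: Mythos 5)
Your proof is correct and follows essentially the same route as the paper: factoriality of $B(H_2)$ trivializes the central projection $P$, the Jordan $*$-isomorphism is identified as a $*$-isomorphism or $*$-anti-isomorphism and implemented spatially (via a unitary, with the transpose handling the anti-isomorphic case), and the four forms arise from combining the two dichotomies. Your explicit bookkeeping of $(wa^Tw^*)^*=w\overline{a}w^*$ is a welcome addition that the paper leaves implicit.
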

\begin{proof}
The center of $B(H_2)$ is the scalar, hence $P$ is a trivial projection ($0$ operator or $I_{B(H_2)}$). Then Theorem \ref{main} asserts that there is a Jordan ${*}$-isomorphism $J$ and $T(a)=T(I_{B(H_1)})J(a)$ for all $a\in {\mathcal A}$ or $T(a)=T(I_{B(H_1)})J(a)^{*}$ for all $a\in {\mathcal A}$. A Jordan $*$-isomorphism from $B(H_1)$ onto $B(H_2)$ is an algebra isomorphism or an algebra antiisomorphism, hence there is a unitary operator $w$ from $H_1$ onto $H_2$ such that $J(b)=wbw^{*}$ for every $b\in B(H_2)$ or $J(b)=w{b}^Tw^{*}$ for every $b\in B(H_2)$ holds. Thus we have the conclusion. 
\end{proof}


\end{document}